\documentclass[11pt,reqno]{amsart}
\usepackage{fullpage}
\usepackage{url}


   %
   %
\usepackage{amsmath}
\usepackage{amsfonts}
\usepackage{amssymb}
\usepackage{amsthm}
\usepackage{mathtools}
\usepackage{mathrsfs}
\usepackage{float}
\usepackage[all,cmtip]{xy}

\usepackage{color}	
\definecolor{due}{RGB}{0,76,147}
   %

      \theoremstyle{definition}
\newtheorem{defi}{Definition}[section]
\theoremstyle{plain}
\newtheorem{thm}[defi]{Theorem}
\newtheorem{conj}[defi]{Conjecture}

\newtheorem{cor}[defi]{Corollary}
\newtheorem{lemma}[defi]{Lemma}
\theoremstyle{remark}

\newtheorem{rmk}[defi]{Remark}
\newtheorem{ex}[defi]{Example}

\theoremstyle{definition}



\newcommand{\longra}{\longrightarrow}


\newcommand{\kl}{{\mathcal L}}
\newcommand{\km}{{\mathcal M}}
\newcommand{\kn}{{\mathcal N}}

\newcommand{\kp}{{\mathcal P}}


\newcommand{\C}{{\mathbb C}}

\newcommand{\IP}{{\mathbb P}}
\newcommand{\Q}{{\mathbb Q}}



%


\newcommand{\Sing}{\operatorname{Sing}}


   
  \makeatletter
\newcommand{\xdashrightarrow}[2][]{\ext@arrow 0359\rightarrowfill@@{#1}{#2}}
\newcommand{\xdashleftarrow}[2][]{\ext@arrow 3095\leftarrowfill@@{#1}{#2}}
\newcommand{\xdashleftrightarrow}[2][]{\ext@arrow 3359\leftrightarrowfill@@{#1}{#2}}
\def\rightarrowfill@@{\arrowfill@@\relax\relbar\rightarrow}
\def\leftarrowfill@@{\arrowfill@@\leftarrow\relbar\relax}
\def\leftrightarrowfill@@{\arrowfill@@\leftarrow\relbar\rightarrow}
\def\arrowfill@@#1#2#3#4{%
  $\m@th\thickmuskip0mu\medmuskip\thickmuskip\thinmuskip\thickmuskip
   \relax#4#1
   \xleaders\hbox{$#4#2$}\hfill
   #3$%
}
\makeatother


\begin{document}
\title{Local negativity of surfaces with non-negative Kodaira dimension and transversal configurations of curves}
\author{Roberto Laface \and Piotr Pokora}
\date{\today}

\subjclass[2010]{Primary 14C20; Secondary 14J70, 52C35, 32S22}
\keywords{configurations of curves, algebraic surfaces, Miyaoka inequality, blow-ups, negative curves, bounded negativity conjecture, Harbourne constants}

\address{Roberto Laface \newline Technische Universit\"at M\"unchen, Zentrum Mathematik - M11,
    Boltzmannstra{\ss}e 3,
    85748 Garching bei M\"unchen, Germany
}
    \email{laface@ma.tum.de}

\address{Piotr Pokora \newline Institut f\"ur Algebraische Geometrie,
    Leibniz Universit\"at Hannover,
    Welfengarten 1,
    D-30167 Hannover, Germany}
   \curraddr{Institute of Mathematics Polish Academy of Sciences, \'Sniadeckich 8, PL-00-656 Warsaw, Poland}
   \email{piotrpkr@gmail.com}

\maketitle

\thispagestyle{empty}
\begin{abstract}
We give a bound on the H-constants of configurations of smooth curves having transversal intersection points only on an algebraic surface of non-negative Kodaira dimension. We also study in detail configurations of lines on smooth complete intersections $X \subset \IP^{n+2}_\C$ of multi-degree $d=(d_1, \dots, d_n)$, and we provide a sharp and uniform bound on their H-constants, which only depends on $d$. 
\end{abstract}


\section{Introduction}

In this article, we carry on with our study of the local negativity phenomenon for algebraic surfaces. This is strictly related to the following celebrated conjecture, which dates back to the beginning of XX century.

\begin{conj}[Bounded Negativity Conjecture (BNC)]
Let $X$ be a smooth projective surface defined over a field of characteristic zero. Then there exists an integer $b(X) \in \mathbb{Z}$ such that for all \emph{reduced} curves $C \subset X$ one has $C^{2} \geq -b(X)$.
\end{conj}

This conjecture is known to hold for some classes of surfaces, for instance surfaces with $\Q$-effective anti-canonical divisor (it follows from the adjuction formula), or for surfaces equipped with a surjective endomorphism which is not an isomorphism (see \cite[Proposition 2.1]{Duke}). However, even in those cases, the conjecture is widely open if we start blow up points. As an example, if we consider the blowup $X_{s}$ of $\mathbb{P}^2_\C$ at $s\geq 10$ very general points, then it is not known whether it has bounded negativity. We predict that in that case one should have $b(X_{s}) = s$, for further details please consult \cite{Duke, Harbourne1}.

In \cite{BdRHHLPSz}, the authors have introduced the notion of H-constant\footnote{In fact, they defined the so-called Hadean constant, although this notion should be attributed to Brian Harbourne.}, which allows one to (potentially) study the bounded negativity property for all blow-ups of a given surface simultaneously. The H-constant should also be thought of as an asymptotic version of the self-intersection numbers, and it provides a more effective approach to the study on the BNC. Let $X$ be a smooth projective surface, and consider a non-empty collection $\mathcal{P}=\lbrace P_1, \dots , P_s \rbrace$ of mutually distinct points on $X$. Then, the \emph{H-constant at $\mathcal{P}$} is defined to be
\[ H(X; \mathcal{P}) := \inf_{C} \frac{\tilde{C}^2}{s},\]
where $\tilde{C}$ is the strict transform of $C$ along the blow-up  $\sigma: \tilde{X} \longra X$ of $X$ at the points of $\kp$, and the infimum is taken over all reduced (possibly reducible) curves on $X$. The \emph{global H-constant of $X$} is the quantity
\[ H(X) := \inf_\mathcal{P} H(X; \mathcal{P}),\]
where the infimum is taken over all non-empty collections of mutually distinct points on $X$. 

The importance of the notion of H-constants in the study of the BNC is highlighted in the following remark: if $H(X) > -\infty$, then, for any collection $\mathcal{P}$ of mutually distinct points on $X$, the BNC holds on the blow-up of $X$ at the points of $\mathcal{P}$. Nevertheless, even if $H(X) = -\infty$, BNC might still be true on $X$ or any of its blow-ups at mutually distinct points.

In this note, we consider the notion of {local H-constant} of a configuration of smooth curves having pairwise transversal intersection points only. This is a quite natural variation of the original H-constants: if $C= C_1 + \cdots + C_n$ is a configuration of smooth curves with $n\geq 2$ having pairwise transversal intersection on $X$, the rational number
   \begin{equation}\label{eq:TA Harbourne constant}
      h(X;{C}) := \frac{C^2-\sum_{P \in {\rm Sing}(C)}({\rm mult}_P(C))^2}{s}
   \end{equation}
   is the \emph{local H-constant of the transversal configuration} ${C} \subset X$, where $s$ is equal to the cardinality of ${\rm Sing}(C)$. Let us emphasize here that in this note, unless otherwise specified, by H-constants we will always mean local H-constants. 
Here is the structure of our paper. Section \ref{1} is concerned with configurations of smooth curves having transversal intersection points on algebraic surfaces: generalizing the techniques in \cite{RP2015}, we were able to extend our previous results to arbitrary surfaces of non-negative Kodaira dimension (Theorems \ref{h-constant} and \ref{bound}). Similar results have been obtained in \cite{PSR}. However, the techniques used therein are very much different. In particular, using the logarithmic Miyaoka-Yau inequality \cite{Miyaoka1} one gets a clear bound on the H-constants, and it also allows to deduce immediately some uniform bounds for certain classes of curves, namely rational and elliptic curves.

In Section \ref{3}, we study configurations of lines on smooth surfaces that are complete intersections $X \subset \IP^{n+2}_\C$. In \cite[Theorem 2.2]{RP2015}, we showed that the obtained estimate on H-constants is quite accurate for configurations of rational curves on K3 and Enriques surface, and we provided a variety of examples. However, the accuracy of this bound is not quite as good, for instance, in the case of lines on a smooth hypersurface $X_{d} \subset \IP^3_\C$ of degree $d\geq 4$. We were able to prove (Theorem \ref{degree_d}) that, for a connected configuration of lines on such a surface, H-constants are bounded by $-d(d-1)$ (cf. \cite[Main Result]{Pokora}), and that this bound is sharp. Similarly, in the case of non-connected configurations, H-constants are bounded by a polynomial of degree 3 in $d$. In fact, building on the hypersurfaces case, one obtains similar bound for configurations of lines on a smooth complete intersection $X=(d_1, \dots, d_n) \subset \IP^{n+2}_\C$, as we remark in Theorem \ref{complete_intersection}.

\section{H-constants for surfaces with non-negative Kodaira dimension}\label{1}

Let $C$ be a configuration of (distinct) smooth curves on a surface $X$ of non-negative Kodaira dimension. The configuration $C$ is \emph{transversal} if the irreducible components of $C$ meet pairwise transversally; otherwise said, locally at every singular point of $C$ (seen as a curve), $C$ is analytically equivalent to a configuration of lines meeting at a single point.

In a previous paper \cite{RP2015}, we have studied the local negativity for configurations of smooth rational curves on surfaces with numerically trivial canonical class. Let us recall that $P$ is an \emph{r-fold point} (or an \emph{r-point}) of the configuration ${C}$
   if it is contained in exactly $r$ irreducible components of ${C}$. The union of all $r$-fold points
   $P\in {C}$ for $r\geq 2$, is the singular set $\Sing({C})$ of ${C}$.
   We set the number $t_r=t_r({C})$ to be the number of $r$-fold points in ${C}$. We can rephrase H-constants by using the $t_{i}$'s. More precisely, if $C$ is a transversal configuration of curves and $s$ is equal to the number of singular points of $C$, then 
$$h(X; {C}) = \frac{{C}^2-\sum_{r\geq 2}r^{2}t_{r}}{s}.$$

In \cite[Theorem 2.2]{RP2015}, we considered the case of surfaces with numerically trivial canonical class. The following result is a generalization of this result to the case of surfaces of non-negative Kodaira dimension, and it is inspired by an idea of Miyaoka \cite[Section~2.4]{Miyaoka1}.

\begin{thm}
\label{h-constant}
Let $X$ be a smooth complex projective surface with non-negative Kodaira dimension, and let ${C=C_1+ \cdots + C_n} \subset X$ be a transversal configuration of smooth curves having $n \geq 2$ irreducible components $C_1, \dots , C_n$. Then, we have
$$K_X.C +4 \sum_{i=1}^n(1-g(C_i)) - t_{2} + \sum_{r \geq 3} (r-4)t_{r} \leq 3c_{2}(X)-K_{X}^{2}.$$
\end{thm}
\begin{proof}
Let $\text{Sing}({C})$ denotes the set of singular points of the configuration. We define $S= \{ P \in {\rm Sing}(C) : {\rm mult}_{p} \geq 3\}$, and denote by $p_{1}, ..., p_{k}$ the points of $S$. Consider the blowing up of $X$ at $S$, namely
\[ \sigma : Y \longrightarrow X.\]
Following \cite[Section~2.4]{Miyaoka1}, we set $\tilde{C}:= \tilde{C}_1 + \cdots + \tilde{C}_n$, $\tilde{C_i}$ being the strict transform of $C_i$ under $\sigma$. The idea is to use the Bogomolov-Miyaoka-Yau inequality
\[3c_2(Y) - 3e(\tilde{C}) \geq (K_Y + \tilde{C})^2,\]
and thus we now need to compute the terms in the above inequality. We see that
\begin{align*}
& c_2(Y) = c_2(X) +k,\\
& e(\tilde{C}) = \sum_{i=1}^n(2-2g(C_i)) - t_2,
\end{align*}
which yield $c_2(Y) - e(\tilde{C}) = c_2(X)+k-\sum_{i=1}^n(2-2g(C_i))+t_2$.

Notice that, since $\kappa(X) \geq 0$, we have that $\vert mK_X \vert \neq \emptyset$ for some $m\geq 1$, and thus $mK_X$ is linearly equivalent to an effective divisor $D$. Therefore,
\[ K_Y + \tilde{C} = (\sigma^*K_X + E) + \tilde{C} = \sigma^*K_X + (E+\tilde{C}) = \frac{1}{m}\sigma^*D+ (E+\tilde{C}),\]
where $E:=\sum_{j=1}^k E_j$ is the sum of all exceptional divisors. It follows that $K_Y + \tilde{C}$ is numerically equivalent to a rational effective divisor, which in turn allows us to use the Bogomolov-Miyaoka-Yau inequality according to \cite[Corollary 1.2]{Miyaoka1}.
Now we have $K_Y+\tilde{C}  = \sigma^*(K_X +C) - \sum_{j=1}^k (m_j-1)E_j$ and
$$(K_Y+\tilde{C})^2 = K_{X}^{2} + K_{X} \cdot C + \sum_{i=1}^n(2g(C_i)-2) + 2\sum_{i<j}C_{i} . C_{j} - \sum_{j=1}^{k}(m_{j}-1)^2 =$$
$$K_{X}^{2} + K_{X}.C + \sum_{i=1}^n(2g(C_i)-2) + 2t_{2} + \sum_{j=1}^{k}(m_{j}-1),
$$
where the first equality is obtained using that $2g(C) - 2 = C . (C + K_{X}) = \sum_{i=1}^n(2g(C_i)-2) + 2 \sum_{i<j}C_{i}.C_{j}$ and the second equality is explained in \cite[Theorem 2.1]{RP2015}.
By the Bogomolov-Miyaoka-Yau inequality, we see that
$$K_{X}^{2} + K_{X}.C + \sum_{i=1}^n(2g(C_i)-2) + 2t_{2} + \sum_{j=1}^{k}(m_{j}-1) \leq 3\Big(c_2(X)+k-\sum_{i=1}^n(2-2g(C_i))+t_{2} \Big)$$ and  finally
$$K_{X}^{2}+K_{X}.C + 4\sum_{i=1}^n(1-g(C_i)) - t_{2} + \sum_{r\geq 3} (r-4)t_r \leq 3c_{2}(X),$$
which completes the proof.
\end{proof}

Now we can prove the first main result of this paper.
\begin{thm}\label{bound}
In the setting of the previous theorem, one has
$$h(X; {C}) \geq -4 + \frac{K_{X}^{2} -3c_{2}(X)+ \sum_{i=1}^n(2-2g(C_i))+t_{2}}{s}$$
\end{thm}
\begin{proof}
Let $\tilde{C}$ be the strict transform of $C = C_1 + \cdots + C_n$ in the blow-up at the $s$ singular points of the configuration. We observe that
\[\tilde{C}^2/s = \frac{C^2 - \sum_{r\geq 2} r^{2}t_{r}}{s} = \frac{\sum_{j=1}^{n}C_{j}^{2} + I_d - \sum_{r \geq 2} r^2t_r}{s},\]
where $I_d := 2 \sum_{i<j} C_i . C_j$ is the number of incidences of the collection ${C}$ of smooth curves on $X$ with pairwise transversal intersections. Obviously one has
\[I_d - \sum_{r \geq 2} r^2t_r = - \sum_{r \geq 2} rt_r,\]
and moreover, by arguing in a similar way, we can rephrase the bound in Theorem \ref{h-constant} in the following way:
\[-\sum_{r \geq 2}r t_r \geq K_{X}^{2} + K_{X}.C + 4\sum_{i=1}^n(1-g(C_i)) + t_{2}-4s-3c_{2}(X).\]
This yields
\[  h(X;{C})   \geq -4 + \frac{K_{X}^{2} -3c_{2}(X)+ 2\sum_{i=1}^n(1-g(C_i))+t_{2}}{s},\]
and we are done.
\end{proof}


We are now going to give a couple of examples.

\begin{ex}
If $A$ is a smooth abelian surface and $\mathcal{C}$ is a configuration of elliptic curves. Let us briefly recall that such a configuration has transversal intersection points only. Indeed, every holomorphic map between abelian varieties is the composite of a group homomorphisms followed by a translation \cite[Proposition 1.2.1]{LangeB}. If $f:E\rightarrow A$ is a holomorphic map from an elliptic curve into $A$, there exists a unique $g: \C \rightarrow\C^2$ such that the homomorphism $\bar{g}: E \rightarrow A$ induced by $g$ yields a factorization $f = t_{f(0)} \circ \bar{g}$, where $t_{f(0)}$ is the translation by $f(0)$ in $A$. Therefore, every elliptic curve is the image of a line in $\C^2$, and the claim follows.
By Theorem \ref{h-constant} for $A$ and $\mathcal{C}$ as above, we obtain
$$t_{2} + t_{3} \geq \sum_{r \geq 5}(r-4)t_{r}.$$
Moreover, we have the following bound
$$h(A;\mathcal{C}) \geq -4 + \frac{t_2}{s} \geq - 4,$$
and as it is pointed out in \cite{Roulleau} this bound is sharp. To this end, consider an abelian surface equipped with the complex multiplication given by $e^{2 \pi i /3}$. Then the equality is provided by the following configuration $$\mathcal{E} = F_{1} + F_{2} + \triangle + \Gamma,$$
where $F_{1},F_{2}$ are fibers, $\triangle$ is the diagonal, and $\Gamma$ is the graph of the complex multiplication.
\end{ex}

\begin{ex}
Let $\mathcal{L}$ be a configuration of curves, all with the same genus $g$, such that the following conditions are satisfied:
\begin{enumerate}
\item all intersection points are pairwise transversal,
\item there exists a positive integer $k$ such that for any pair $C_{1},C_{2} \in \mathcal{L}$ (not necessarily distinct) we have $C_{1}.C_{2} = k \geq 1$,
\item there is no point such that all curves meet.
\end{enumerate}
Then $\mathcal{L}$ is said to be $k$-regular. Urz\'ua \cite[Remark 7.4]{Urzua} has shown that if $\mathcal{L} \subset X$ is $k$-regular and consists of $n\geq 3$ curves, then $s \geq n$. In this case, our bound in Theorem \ref{bound} depends only on genus of the curves, and it is independent of the number of curves in the configuration. 

First of all, by Theorem \ref{bound}, 
\[h(X; \mathcal{L})   \geq -4 + \frac{K_{X}^{2} - 3c_{2}(X) + 2n(1-g) + t_{2}}{s} \] \[ \geq -4 + \frac{K_{X}^{2} - 3c_{2}(X) + 2n(1-g)}{s}. \]
Since $K_{X}^{2} -3c_{2}(X) \leq 0$ by the Miyaoka-Yau inequality, then, in order to find a lower bound on $h(X;\mathcal{L})$, we need to find a lower bound on $s$. Clearly $s \geq n$, hence
$$h(X; \mathcal{L}) \geq -4 - 2(g-1) - (3c_{2}(X) - K_{X}^{2}).$$
\end{ex}

\begin{rmk}
	Similarly to the case of surfaces with numerically trivial canonical class treated in \cite{RP2015}, we can obtain a uniform bound on the H-constant of rational curves on a surface $X$ of non-negative Kodaira dimension. In fact, a similar argument to \cite[Corollary 2.4]{RP2015} shows that 
    \[ H_{\text{rational}}(X) \geq -4- \delta(X), \]
    where $\delta(X):=3e(X)-K_{X}^{2}$. Indeed, this is not optimal, as one immediately sees by comparing with \cite[Corollary 2.4]{RP2015}. 

It is somewhat curious how we can obtain information on the geometry of configuration by looking at the H-constant. Suppose we are given a \textit{star configuration}, i.e.~a (connected) configuration $C=C_1+ \cdots + C_n$ of rational curves intersecting at a single point $p$ and nowhere else. Then, we have that
\[ -4-\delta(X) + 2n \leq h(C) = -3n - K_X.C, \]
which yields the inequality
\[ 5n + K_X.C \leq 4 + \delta(X). \]
This inequality governs the geometry of star configurations: for instance, if $X$ is a K3 surface, then such a configuration consists of at most 15 rational curves, while if $X$ is an Enriques surface there can only be at most 8 such curves.

It is also possible to obtain a uniform bound on the H-constants of configurations of elliptic curves. It is readily seen that
\[ H_{\text{elliptic}}(X) \geq -4-\delta(X). \]
Unfortunately, this technique does not deliver any such information on configurations of curves of higher genus.
\end{rmk}


\section{H-constants for line configurations on smooth complete intersection surfaces}\label{3}
 
In \cite{BdRHHLPSz}, the authors have shown that if $\mathcal{L}$ is a line configuration on the complex projective plane, then $h(\mathbb{P}^{2}_{\mathbb{C}}; \mathcal{L}) \geq -4$. In particular, this result proves the existence of a uniform bound on the H-constants of line configurations on $\mathbb{P}^2_\C$. In this section, we will prove that there exists a uniform bound on Harbourne constants for line configurations on smooth hypersurfaces of degree $d$ in $\mathbb{P}^3_\C$ for any $d \geq 4$. It is worth mentioning that the first non-trivial case $d=3$ was fully described in \cite[Proposition 4.1]{Pokora}.

Our interest in this direction arises because of the following observation. In \cite{RP2015}, we studied the local negativity phenomenon for transversal configurations of smooth rational curves on surfaces with numerically trivial canonical class: we provide a bound for their H-constants and we show this bound to be quite accurate in many examples (K3 surfaces or Enriques surfaces). However, even in the easy case of line arrangements of quartics surfaces in $\IP^3_\C$ (which are K3 surfaces), this estimate turns out to be not as accurate. In fact, let $S \subset \mathbb{P}^3_\C$ be the Schur quartic surface, namely the hypersurface of $\mathbb{P}^3_\C$ given by the following equation:
\[ S_{4} : \qquad x^4 - xy^3 = z^4 - zw^3.\]
Let us consider a configuration $L$ consisting of four lines meeting at a unique point (see, for example, \cite[Example 3.5]{RP2015}). By Theorem \ref{bound}, we conclude that $h(S_{4};L) \geq -68$, while a direct computation shows that $h(S_{4};L) = -12$. This observation motivated us in pursuing a detailed study of configurations of lines on complete intersection surfaces $X \subset \mathbb{P}^{n+2}_\C$. As a further motivation, let us also mention here that, up to now, the most negative values of H-constants were obtained using line configurations on smooth hypersurfaces. Our analysis is divided into two parts: in order to show the main ideas, we first deal with the case of hypersurfaces in $\IP^3$. Afterwards, we will see how the general case follows from the hypersurface case.\\

To start with, we need to define connected configurations of lines. Let $X$ be a smooth hypersurface of degree $d \geq 4$, and let $\mathcal{L}$ be a configuration of lines on $X$. We say that $\mathcal{L}$ is \emph{connected} if the union of the lines in $\mathcal{L}$ is connected as a subset of $\mathbb{P}^3_\C$. Let us also observe that if $\mathcal L$ is a (connected) configuration of $k \geq 2$ lines on $X$, then
\[
 h(X; \mathcal{L}) = -\frac {(d-2)k + \sum r t_r}{\sum t_r}.
\]
We move on to stating the main result for hypersurfaces.

\begin{thm}\label{h-const_lines}
Let $X$ be a surface in $\mathbb{P}^3_\C$ of degree $d\geq 4$, and let $\mathcal{L}$ be a connected configuration of $k \geq 2$ lines on $X$. Suppose that $m \in \mathbb{N}$ is the positive integer such that $t_m \neq 0$ and $t_r = 0$ for all $r > m$. Then,
\[h(X;\mathcal{L}) \geq -m(d-1).\]
Moreover, this bound is sharp, and it is achieved by the configuration consisting of $m$ lines meeting at a single point.
\end{thm}

This result has to be interpreted in the following sense: a low H-constant corresponds to a point of high multiplicity in our configuration. As a consequence, we will obtain the bound on the H-constants that we seek:

\begin{thm}\label{degree_d}
For a connected configuration $\mathcal{L}$ of $k \geq 2$ lines on $X$ of degree $d \geq 4$ one has
\[ h(X;\mathcal{L}) \geq -d(d-1).\]
\end{thm}

\begin{proof}[Proof of Theorem \ref{h-const_lines}]
Recall that given a line $\ell \subset X$, $\ell^2 = 2-d$. Let $P$ be an $m$-point of $\mathcal{L}$. Since $X$ is smooth, the $m$ lines meeting at $P$ must be coplanar. Let us call these lines $\ell_1, \dots, \ell_m$, and consider the plane $\Pi$ containing them. The hyperplane section $X \cap \Pi$ can be written as
\[ X \cap \Pi = \ell_1 \cup \dots \cup \ell_m \cup \ell_1' \cup \dots \cup \ell_e' \cup C_1 \cup \dots \cup C_s,\]
where $\ell'_j$ is a line not through $P$, for all $0 \leq j \leq e$, $C_i$ is a plane curve of degree $\deg(C_i) \geq 2$, for all $0 \leq i \leq s $, and 
\[ m+e +\sum_{i=1}^s \deg(C_i) = d.\]
After possibly renumbering, the lines $\ell_1', \dots, \ell_{\bar{e}}'$ belong to the configuration $\mathcal{L}$ ($0 \leq \bar{e} \leq e$), while the lines $\ell_{\bar{e}+1}, \dots , \ell_e$ do not. Set
\[\mathcal{M}:= \lbrace \ell_1, \dots, \ell_m, \ell_1', \dots, \ell_{\bar{e}}' \rbrace,\]
and $\mathcal{N}:= \mathcal{L}\setminus\mathcal{M}$. Finally, letting $k\geq 2$ be the number of lines in $\mathcal{L}$, put $n:=k-m-\bar{e}$. Our aim is to find a non-negative integer $b$ such that $h(\mathcal{L}) \geq -b$. This quantity will depend on the degree $d$ and the complexity of our configuration, namely the maximum number of lines in $\mathcal{L}$ intersecting at a single point.

Notice that
\begin{align*}
h(X;\mathcal{L}) &= - \frac{(d-2)k + \sum rt_r}{\sum t_r} = - \frac{(d-2)(n+m+\bar{e}) + \sum rt_r}{\sum t_r},
\end{align*}
and assume that $h(X;\mathcal{L}) \geq - b$. This condition is equivalent to 
\begin{align}\label{eq1}
(d-2)(n+m+\bar{e}) + \sum (r-b)t_r \leq 0.
\end{align}
Let $M$ (respectively $N$) be the divisor associated to the configuration $\mathcal{M}$ (respectively $\mathcal{N}$). We will now proceed with splitting the summations according to whether a $r$-point lies on $M \setminus N$, $M \cap N$ or $N \setminus M$. To this end, for a set $S$, let us denote by $t_{r,S}$ the number of $r$-points that lie in $S$. Then, one has:
\begin{align*}
\sum r t_r &= \sum r t_{r,M \setminus N} + \sum r t_{r, M \cap N} + \sum r t_{r, N \setminus M};\\
\sum t_r &= \sum t_{r,M \setminus N} + \sum t_{r, M \cap N} + \sum t_{r, N \setminus M}.
\end{align*}
Now observe that we can write $n$ as
\begin{align}\label{eq3}
n = n_R + \sum (r-1) t_{r,M \cap N},
\end{align}
where $n_R$ is the number of lines in $\mathcal{N}$ which do not intersect any line in $\mathcal{M}$. This follows because, if $Q$ is a point where a line in $\mathcal{M}$ and a line in $\mathcal{N}$ intersect, then there is no other line in $\mathcal{M}$ going through $Q$ (otherwise $Q$ would be singular). Moreover, we can bound $n_R$ as 
\begin{align}\label{eq4}
n_R \leq \sum r t_{r,N \setminus M}.
\end{align}
Indeed, as the configuration is connected, on each line contributing to $n_R$ there is an $r$-point in $N \setminus M$, for some $r \geq 2$, and taking the weighted sum yields the estimate.

Now, notice that the quantities $t_{r,M \setminus N}$ only depend on the configuration $M$, which is a configuration of lines in $\mathbb{P}^2_\C$. We aim at giving an upper bound for the number of lines of the configuration ($m+\bar{e}$ in our case) in terms of the $t_r$'s. In general, letting $\mathcal{C}$ be a configuration of $c \geq 2$ lines in $\mathbb{P}^2_\C$, and $\ell$ a line in $\mathcal{C}$, one has:
\begin{align}
c = 1 + \sum (r-1)t_{r,\ell}.
\end{align}
Moreover, unless $\mathcal{C}$ is the configuration of $c$ lines meeting at one point,
\begin{align}\label{eq2} 
c = 1 + \sum (r-1)t_{r,\ell} \leq \sum rt_r,
\end{align}
as there exists at least an $r$-point outside of $\ell$, for some $r \geq 2$. Nevertheless, the same estimate holds even in the case of all lines meeting at a single point. Then, we can apply (\ref{eq3}), (\ref{eq4}) and (\ref{eq2}) to the left-hand side of (\ref{eq1}) to get:
\begin{align*}
(d-2)&(n+m+\bar{e}) + \sum (r-b)t_r \\
& \leq \sum \big[(d-1)r-(d-2+b) \big]t_{r,M \cap N}\\
 &\quad + \sum \big[(d-1)r-b \big](t_{r,M \setminus N} + t_{r,N \setminus M}).
\end{align*}
This shows that it is enough to set $b := m(d-1)$ to satisfy condition (\ref{eq1}), and this completes the proof of the existence of the bound. 

We are left to show that this bound is actually sharp. The condition $h(X;\mathcal{L}) = -m(d-1)$ is equivalent to 
\begin{align*}
& \sum \big[(d-1)(r-m-1)+1\big]t_{r,M \cap N} \\
& \quad + \sum \big[(d-1)(r-m) \big](t_{r,M \setminus N} + t_{r,N \setminus M})=0
\end{align*}
As the first summation is always negative and the second is non-positive, it follows that $M \cap N = \emptyset$. By the connectedness of $\mathcal{L}$, this in turn implies that $\mathcal{N} = \emptyset$, and thus
\[ \sum \big[(d-1)(r-m) \big]t_{r,M}=0.\]
This equality implies that $\mathcal{L} = \mathcal{M}$ must be the configuration of $m$ lines meeting at a single point.

\end{proof}

Let us present the following example, which shows that the bound in Theorem \ref{h-const_lines} is sharp.

\begin{ex}(Schur degree-$d$ surface)
For a positive integer $d$ let us consider the following surface
\[S_d: \ x^d - xy^{d-1} = z^d - zw^{d-1}.\]
We will call this surfaces as the Schur degree-$d$ surface. Observe that for $d=4$ we recover the celebrated Schur quartic surface. By intersecting $X$ with the hyperplane $\lbrace x = 0 \rbrace$ one obtains a configuration $\mathcal{L}$ of $d$ lines $\ell_1, \dots , \ell_d$ intersecting at one $d$-fold point. For $m \leq d$, consider a subconfiguration $\mathcal{L}_m$ of $\mathcal{L}$ consisting of $m$ lines. By the adjuction formula we can compute that each line has self-intersection $\ell_i ^2 = 2-d$. Therefore,
\[h(S_d; \mathcal{L}_m)= -m(d-1).\]
In case $m=d$, we notice that, as the degree becomes larger, the Harbourne constant gets more negative, i.e.
\[ \lim_{d \to + \infty} h(S_d; \mathcal{L}_d) = - \infty.\]
This phenomenon was already observed in \cite[Example 3.4]{Pokora}. However, in that example one has that the H-constant decreases like $-d/2$ as $d \rightarrow +\infty$, whereas for the degree $d$-Schur surface we have
\[h(S_d; \mathcal{L}_d) \sim -d^2\]
as $d \rightarrow +\infty$, hence quadratic growth for the negativity.
\end{ex}

So far, we have dealt with connected configurations of lines only. We would like to obtain a bound which holds for an arbitrary configuration of lines, possibly non-connected. The first step toward such a result is showing that the bound $h(X; \mathcal{L})\geq -d(d-1)$ holds for a configuration $\mathcal{L}$ whose connected components consist of at least two lines.

\begin{lemma}\label{manyconnected}
Let $X$ be a surface of degree $d\geq 4$ in $\mathbb{P}^3_\C$, and let $\mathcal{L}= \mathcal{L}_1 \sqcup \dots \sqcup \mathcal{L}_n$  ($n \geq 1$) be a configuration of lines whose connected components are the $\mathcal{L}_i$'s and such that each $\mathcal{L}_i$ consists of at least two lines. Then
\[ h(X; \mathcal{L}) \geq -d(d-1).\]
\end{lemma}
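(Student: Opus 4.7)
The plan is to reduce the statement to Theorem \ref{degree_d} applied componentwise, combined with a mediant-type inequality.

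First I would observe that, since the connected components $\mathcal{L}_1, \dots, \mathcal{L}_n$ are pairwise disjoint as subsets of $\P^3_\C$, no singular point of $\mathcal{L}$ lies on two different components. Hence the $r$-point counts decompose additively: writing $t_r^{(i)}$ for the number of $r$-points of $\mathcal{L}_i$, and $s_i = \sum_r t_r^{(i)}$, $k_i$ for the number of lines in $\mathcal{L}_i$, one has $t_r = \sum_{i=1}^n t_r^{(i)}$, $s = \sum_i s_i$ and $k = \sum_i k_i$. Using the expression for the Harbourne index of a connected configuration given just before Theorem \ref{degree_d}, this yields
\begin{equation*}
h(X; \mathcal{L}) \;=\; -\,\frac{(d-2)k + \sum_r r\, t_r}{\sum_r t_r} \;=\; -\,\frac{\sum_{i=1}^n \bigl[(d-2)k_i + \sum_r r\, t_r^{(i)}\bigr]}{\sum_{i=1}^n s_i}.
\end{equation*}

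Next, since each $\mathcal{L}_i$ is a connected configuration with $k_i \geq 2$ lines, it has at least one singular point, so $s_i \geq 1$. Applying Theorem \ref{degree_d} to each $\mathcal{L}_i$ gives
\begin{equation*}
(d-2)k_i + \sum_r r\, t_r^{(i)} \;\leq\; d(d-1)\, s_i
\end{equation*}
for every $i = 1, \dots, n$.

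Finally I would invoke the elementary mediant inequality: if $a_i \leq C b_i$ with $b_i > 0$ for all $i$, then $\sum a_i \leq C \sum b_i$, i.e.\ $\sum a_i / \sum b_i \leq C$. Summing the componentwise inequalities and dividing by $\sum_i s_i > 0$ yields
\begin{equation*}
\frac{\sum_{i=1}^n \bigl[(d-2)k_i + \sum_r r\, t_r^{(i)}\bigr]}{\sum_{i=1}^n s_i} \;\leq\; d(d-1),
\end{equation*}
which is equivalent to $h(X; \mathcal{L}) \geq -d(d-1)$. There is no real obstacle here; the only point that needs to be checked carefully is that connected components genuinely contribute disjointly to $t_r$ and $s$, which is immediate from the definition of connectedness used in this section.
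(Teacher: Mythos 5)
Your proof is correct and follows essentially the same route as the paper: the paper likewise splits the singular points over the connected components, applies Theorem \ref{degree_d} to each $\mathcal{L}_i$ (noting $s_i>0$ since each component has at least two lines), and writes $h(\mathcal{L})$ as the weighted average $\frac{1}{s}\sum_i s_i\, h(\mathcal{L}_i)$, which is exactly your mediant inequality. No discrepancies.
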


\proof
Set $s_i := \#{\rm Sing}(\mathcal{L}_i)$ ($1 \leq i \leq n$), and $s := \sum_{i=1}^n s_i$. Let $L$ be the divisor associated to $\mathcal{L}$, and let $L_i$ be the one associated to $\mathcal{L}_i$ ($i = 1, \dots , n$). The existence of at least two lines in each $\mathcal{L}_i$ guarantees that $s_i >0$ ($1 \leq i \leq n$). Then, by means of Theorem \ref{degree_d}, we get
\begin{align*}
h(X;\mathcal{L}) &=  \frac{\sum_{C \in \mathcal{L}}C^2 - \sum rt_{r,L}}{s} \\
                &= \sum_{i=1}^n \frac{\sum_{C \in \mathcal{L}_i} C^2 - \sum rt_{r,L_i}}{s}\\
                &= \frac{1}{s}\sum_{i=1}^n s_i\frac{\sum_{C \in\mathcal{L}_i} C^2 - \sum rt_{r,L_i}}{s_i}\\
                &\geq -\frac{1}{s} \sum_{i=1}^n d(d-1) s_i = -d(d-1).
\end{align*}
\endproof
 
We are left to handle the case in which some of the $\mathcal{L}_i$'s consist of an isolated line (and thus $s_i = 0$). These isolated lines give non-trivial contribution to the H-constant.

\begin{thm}\label{arbitraryconfig}
Let $X$ be a surface of degree $d\geq 4$ in $\mathbb{P}^3_\C$, and let
$$\mathcal{L}= \mathcal{L}_1 \sqcup \dots \sqcup \mathcal{L}_m \sqcup \mathcal{L}_{m+1} \sqcup \dots \sqcup \mathcal{L}_{m+n} \qquad (m \geq 1, \, n \geq 0)$$
be a configuration of lines whose connected components are the $\mathcal{L}_i$'s, and such that $\mathcal{L}_i$ consists of at least two lines for $i = 1, \dots , m$, and $\mathcal{L}_i$ is an isolated line for $i = m+1, \dots, m+n$. Then,
\[ h(X; \mathcal{L}) \geq -d(d-1)+ (2-d)n.\]
\end{thm}

\proof
The assumptions imply that $s_i >0$ for $i= 1, \dots , m$, and $s_i = 0$ otherwise. By means of Lemma \ref{manyconnected}, we have
\begin{align*}
h(X;\mathcal{L}) &= \frac{\sum_\mathcal{L} C^2 - \sum_\mathcal{L} rt_r}{s_1 + \cdots + s_m} \\
                &= \sum_{i=1}^m \frac{\sum_{\mathcal{L}_i} C^2 - \sum_{\mathcal{L}_i} rt_r}{s_1 + \cdots + s_m} +  \sum_{i=m+1}^{m+n} \frac{\sum_{\mathcal{L}_i} C^2 - \sum_{\mathcal{L}_i} rt_r}{s_1 + \cdots + s_m}\\
                &\geq -d(d-1) +  \sum_{i=m+1}^{m+n} \frac{\sum_{\mathcal{L}_i} C^2 - \sum_{\mathcal{L}_i} rt_r}{s_1 + \cdots + s_m}\\
                &= -d(d-1) + \sum_{m+1}^{m+n} \frac{2-d}{s_1 + \cdots + s_m} \geq -d(d-1) + (2-d)n.
\end{align*}
\endproof

At a closer look, our proofs so far in this section are valid also for $d=3$. The next result gives a uniform bound for the Harbourne constant of line configurations, recovering the same situation as for lines in $\mathbb{P}^2_\C$. Here, we are invoking results of Miyaoka \cite{Miyaoka1} on surfaces with nonnegative Kodaira dimension, therefore we do need the condition $d \geq 4$.

\begin{cor}
Let $X$ be a smooth surface of degree $d\geq 4$ in $\mathbb{P}^3$, and let $\mathcal{L}$ be a configuration of lines on $X$. Then,
\[h(X; \mathcal{L}) \geq -2d^3 + 7d^2 - 6d -2.\]
\end{cor}
\proof
A result of Miyaoka \cite{Miyaoka1} states that the maximum number of disjoint lines on a hypersurface in $\mathbb{P}^3$ of degree $d\geq 4$ is $2d(d-2)$. Let
$$\mathcal{L}= \mathcal{L}_1 \sqcup \dots \sqcup \mathcal{L}_m \sqcup \mathcal{L}_{m+1} \sqcup \dots \sqcup \mathcal{L}_{m+n} \qquad (m \geq 1, \, n \geq 0)$$
be an arbitrary configuration of lines. As $m\geq 1$, $n \leq 2d(d-2)-1$: if $n = 2d(d-2)$, as $m \geq 1$, there would be $2d(d-2)+1$ disjoint lines, which is clearly impossible. Then, by means of Theorem \ref{arbitraryconfig}, we have
\[h(X;\mathcal{L}) \geq -d(d-1) + (2-d)n \geq -2d^3 + 7d^2 - 6d -2.\]
\endproof

Now, we would like to discuss the case of complete intersection surfaces in projective spaces. We illustrate how the argument in the hypersurface case generalizes to complete intersections in the case of connected configurations, leaving the case of possibly non-connected configurations to the interested reader. 

Let $X$ be a smooth complete intersection of multi-degree $(d_1,\dots,d_n)$ in $\IP^{n+2}$, and let $\kl$ be a (connected) configuration of $k \geq 2$ lines on $X$. As an embedded surface, $X$ has degree $\prod_{i=1}^n d_i$, and adjuction formula takes the form
\[ K_X = \Big(-n-3+\sum_{i=1}^n d_i \Big) \xi, \]
$\xi$ being the class of a hyperplane section. If $\ell \subset X$ is a line, then $\ell^2 = n+1 - \sum_{i=1}^n d_i$. If $\sum_{i=1}^n d_i \leq n+1$, then we can find an easy bound on the H-constant of a configuration $\kl$ of lines on $X$: letting $m$ be the positive integer such that $t_m \neq 0$ and $t_r =0$ for all $r >m$, one has $h(\kl) \geq -m$.\\

Therefore, from now on, we will always assume that $\sum_{i=1}^n d_i > n+1$. Letting $m$ be the integer described above, let $p$ be an $m$-fold point of $\kl$ (i.e.~a point where exactly $m$ lines of $\kl$ meet). As $X$ is smooth, the lines concurring at $p$ must lie on the same 2-plane $\Pi$. Consider the intersection 
\[ X \cap \Pi = \bigcup_{i=1}^m \ell_i \cup \bigcup_{j=1}^e \ell'_j \cup \bigcup_{k=1}^s C_k \cup \bigcup_{l=1}^t P_l, \]
consisting of the $m$ lines through $p$, further $e$ lines $\ell_j'$ (not passing through $p$ or not in $\kl$), some curves $C_k$ of degree at least $2$, and some isolated points $P_l$. After possibly reordering, we can assume that $\ell'_j$ is a line in $\kl$ which does not go through $p$ for $j=1, \dots, \bar{e}$, while $\ell'_j$ does not belong to $\kl$ for $j=\bar{e}+1, \dots,e$. Now we define $\km = \lbrace {\ell_1, \dots, \ell_m,\ell'_1, \dots, \ell_{\bar{e}}} \rbrace$ and $\kn := \kl \setminus \km$, and set $n:= \# \kn$.\\

Let $n_R$ be the number of lines of $\kl$ intersecting one of the $\ell'_j$'s for $j=\bar{e}+1, \dots, e$, one of the $C_k$'s, or passing through one of the $P_l$'s. Following the proof of Theorem \ref{h-const_lines}, one has that:
\begin{enumerate}
	\item $n=n_R + \sum_r (r-1)t_{r,M \cap N}$;
	\item $n_R \leq \sum_r rt_{r,N \setminus M}$;
	\item $m + \bar{e} \leq \sum_r r t_{r,M \setminus N}$.
\end{enumerate}
These facts imply the following:

\begin{thm}
	In the above hypothesis, $h(X;\kl) \geq -m \big(\sum_{i=1}^n d_i - n \big)$. The bound is sharp, and it is attained for a configuration of $m$ lines interesecting at a single point.
\end{thm}

We can also obtain a bound on the H-constants that only depends on multi-degree $(d_1, \dots, d_n)$ of $X$.

\begin{thm}\label{complete_intersection}
	For a smooth complete intersection $X \subset \IP^{n+2}_\C$ of multi-degree $(d_1, \dots, d_n)$ and a configuration $\kl$ of lines on $X$, we have the following bound on the H-constants:
	\[h(X;\kl)\geq -\Big(\max_{1 \leq i \leq n} d_i \Big) \cdot \Bigg( \sum_{i=1}^n d_i - n \Bigg).\]
\end{thm}	

\begin{proof}
Let $m$ be the positive integer such that $t_m \neq 0$ and $t_r = 0$ for $r >m$. If $\ell$ is a general line on $\Pi$, we can use $\ell$ to determine the degree of the (reducible) plane curve in $X \cap \Pi$ (we disregard the isolated points in this intersection). In fact, this degree is at most $\max_{1 \leq i \leq n} d_i$. Indeed, writing $X = \bigcap_{i=1}^n Z_i$, with $\deg Z_i = d_i$ ($i=1, \dots, n$), there exists an index $\alpha$ such that $\ell \nsubseteq Z_\alpha$ (otherwise $\ell \subset X$). For such $\alpha$, $\ell \cap Z_\alpha$ consists of $d_\alpha$ points (here we use the genericity assumption), so that $\ell \cap X$ consists of at most $d_\alpha$ points. Taking the maximum of the $d_i's$, we have shown that the degree of the plane curve in $X \cap \Pi$ is at most $\max_{1 \leq i \leq n} d_i$, hence also that $m \leq \max_{1 \leq i \leq n} d_i$.
\end{proof}

\paragraph*{\emph{Acknowledgement.}}
It is our pleasure to thank Davide Cesare Veniani for fruitful conversations, and especially for showing us a proof of Theorem \ref{degree_d} in the case $d=4$, from which our argument grew out. We also would like to thank Xavier Roulleau for useful remarks on an earlier draft of this manuscript. The second author was partially supported by National Science Centre Poland Grant 2014/15/N/ST1/02102 and the major part of the project was conducted when he was a member of SFB $45$ \emph{"Periods, moduli spaces and arithmetic of algebraic varieties"}.
Finally, we would like to thank the anonymous referee for very valuable comments and remarks which allowed to improve the paper.



\end{document}